\documentclass{article}
\usepackage{amsthm,amssymb}
\newcommand{\HO}{\hbox{\rm Homth}}
\theoremstyle{plain}
\newtheorem{theorem}{Theorem}
\newtheorem{corollary}[theorem]{Corollary}
\newtheorem{lemma}[theorem]{Lemma}
\theoremstyle{definition}

\title{Abundance of Progressions in a Commutative\\ Semigroup by Elementary Means}

\date{}
\author{Sayan Goswami
\footnote{Department of Mathematics, 
           University of Kalyani, 
           Kalyani-741235,
           Nadia, West Bengal, India
           {\tt sayan92m@gmail.com}}
\and
Subhajit Jana
\footnote{Department of Mathematics, 
          University of Kalyani, 
          Kalyani-741235,
          Nadia, West Bengal, India
          {\tt suja12345@gmail.com}}
}

\begin{document}
\maketitle

\begin{abstract}
Furstenberg and Glasner proved that for an arbitrary
$k\in\mathbb{N}$, any piecewise syndetic set contains a $k$-term arithmetic
progression and such collection is also piecewise syndetic in $\mathbb{Z}$.
They used the algebraic structure of $\beta\mathbb{N}$. The above result
was extended for arbitrary semigroups by Bergelson and Hindman, again
using the structure of the Stone-\v{C}ech compactification of a general
semigroup. Beiglb\"ock provided an elementary proof of the above
result and asked whether the combinatorial argument in his proof can
be enhanced in a way which makes it applicable to a more abstract
setting.

In this work we extend that technique of Beiglb\"ock
in commutative semigroups.
\end{abstract}

\maketitle

\section{Introduction}

A subset $S$ of $\mathbb{Z}$ is called {\it syndetic\/} if
there exists $r\in\mathbb{N}$ such that $\bigcup_{i=1}^{r}(S-i)
=\mathbb{Z}$.
Again a subset $S$ of $\mathbb{Z}$ is called {\it thick\/}
if it contains arbitrarily long intervals. Sets which can be
expressed as the intersection of thick and syndetic sets are called {\it piecewise
syndetic\/}. 

For a general commutative semigroup $(S,+)$, a set $A\subseteq S$
is said to be syndetic in $(S,+)$, if there exist a finite nonempty set $F\subseteq S$
such that $\bigcup_{t\in F}(-t+A)=S$ where $-t+A=\{s\in S:t+s\in A\}$. A set $A\subseteq S$ is said
to be thick if for every finite nonempty set $E\subseteq S$, there exists an
element $x\in S$ such that $E+x\subseteq A$. A set $A\subseteq S$
is said to be piecewise syndetic set if there exist a finite nonempty set $F\subseteq S$
such that $\bigcup_{t\in F}(-t+A)$ is thick in $S$.
It can be proved that a piecewise syndetic set is the intersection
of a thick set and a syndetic set \cite[Theorem 4.49]{HS}.

One of the famous Ramsey theoretic results is van der Waerden's
Theorem \cite{V} which states that, given $r,k\in\mathbb{N}$ there is
some $l\in\mathbb{N}$ such that one cell of any partition
$\{C_{1},C_{2},\ldots,C_{r}\}$ of $\{1,2,\ldots,l\}$ contains an arithmetic
progressions of length $k$. It follows from van der Waerden's Theorem
that any piecewise syndetic subset $A$ of $\mathbb{N}$ contains arbitrarily
long arithmetic progressions. To see this, pick finite $F\subseteq\mathbb{N}$
such that $\bigcup_{t\in F}(-t+A)$ is thick in $\mathbb{N}$. Let $r=|F|$ and
let a length $k$ be given. Pick $l$ as guaranteed for $r$ and $k$ and pick
$x$ such that $\{1,2,\ldots,l\}+x\subseteq\bigcup_{t\in F}(-t+A)$. For
$t\in F$, let $C_t=\{y\in\{1,2,\ldots,l\}:y+x\in(-t+A)\}$. Pick
$a$ and $d$ in $\mathbb{N}$ and $t\in F$ such that
$\{a,a+d, a+2d,\ldots,a+(k-1)d\}\subseteq C_t$ and let 
$a'=a+x+t$.
Then $\{a',a'+d, a'+2d,\ldots,a'+(k-1)d\}\subseteq  A$.

A {\it homothetic\/} copy of a finite set $F$ in a commutative semigroup
$(S,+)$, is of the form $a+n\cdot F=\{a+n\cdot x:x\in F\}$, where $a\in S$, $n\in\mathbb{N}$
and $n\cdot x$ is the sum of $x$ with itself $n$ times.
Given a set $A\subseteq S$, we denote by $\HO_A(F)$ the set of 
homothetic copies of $F$ that are contained in $A$.

Furstenberg and E. Glasner \cite{FG} proved that if $S$ is a
piecewise syndetic subset of $\mathbb{Z}$ and $l\in\mathbb{N}$ then
the set of all length $l$ progressions contained in $S$ is also
large.
\begin{theorem}
\label{FGthm}Let $k\in\mathbb{N}$ and assume that $S\subseteq\mathbb{Z}$
is piecewise syndetic. Then $\{(a,d)\,:\,a,a+d,\ldots,a+kd\in S\}$
is piecewise syndetic in $\mathbb{Z}^{2}$.
\end{theorem}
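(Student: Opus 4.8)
The plan is to follow the pattern of the deduction recalled above (piecewise syndetic sets contain long progressions) but to promote the single progression it yields to an abundant family, with van der Waerden's theorem replaced by its multidimensional (Gallai--Witt) form.

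\emph{Setup and the finite witness.} First I would invoke the structural description of piecewise syndeticity: fix a finite $F\subseteq\mathbb{Z}$ with $T:=\bigcup_{t\in F}(-t+S)$ thick, and, after translating $S$ (which merely translates the target set), arrange that $F=\{1,\dots,N\}$. Write $P:=\{(a,d):a+id\in S\text{ for }0\le i\le k\}$. By the very definition of piecewise syndeticity it suffices to produce \emph{one} finite $G\subseteq\mathbb{Z}^{2}$ with $\bigcup_{g\in G}(-g+P)$ thick in $\mathbb{Z}^{2}$, and I would take $G:=\{(s,s')\in\mathbb{Z}^{2}:s+is'\in\{1,\dots,N\}\text{ for all }0\le i\le k\}$, which is finite since $(s,s')\in G$ forces $s\in\{1,\dots,N\}$ and $|s'|\le N$. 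The point of $G$ is that if $(s,s')\in G$ and $(a+s,d+s')\in P$, then the $i$-th term $a+id$ of the progression gets shifted into $S$ by $s+is'$, and $(s+is')_{0\le i\le k}$ is itself an arithmetic progression lying in $F$; thus elements of $G$ are exactly the admissible ``progression-valued corrections''. (For $k=1$ every pair is a progression and the argument collapses to the easy thick case; the work is in $k\ge 2$.)

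\emph{The thick background set.} A direct computation shows that $P_{T}:=\{(a,d):a+id\in T\text{ for }0\le i\le k\}$ is thick in $\mathbb{Z}^{2}$: given a finite $E\subseteq\mathbb{Z}^{2}$, choose an interval inside $T$ long enough to contain all of $a,a+d,\dots,a+kd$ for every $(a,d)$ in a suitable translate of $E$, and translate $E$ there. Since $P\subseteq P_{T}$ and $P_{T}$ is thick, everything reduces to showing that $\bigcup_{g\in G}(-g+P)$ contains arbitrarily large boxes.

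\emph{The core step and the obstacle.} Given a finite $E$, I would use thickness of $T$ to fix an interval $I\subseteq T$ far longer than $E$ together with a selector colouring $c:I\to\{1,\dots,N\}$ with $x+c(x)\in S$, and place a translate of $E$ so that $a+id\in I$ for every point $(a,d)$ of the translate and every $0\le i\le k$. For a single point $(a,d)$, whenever the word $c(a),c(a+d),\dots,c(a+kd)$ is an arithmetic progression one gets $(a,d)\in -g+P$ for the corresponding $g\in G$; the real task is to ensure, after one more translation, that \emph{every} point of the box admits some admissible correction, and that progressions of large common difference are reached as well --- the latter forcing the selector colouring to be applied along rescaled sub-progressions of $I$. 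This is exactly where van der Waerden's theorem, in its multidimensional form, is fed in, followed by the same ``pull the monochromatic configuration back through $F$'' manoeuvre as in the one-dimensional deduction. I expect this to be the crux: multidimensional van der Waerden natively delivers monochromatic \emph{homothetic} (rescaled) copies of a box, whereas thickness in $\mathbb{Z}^{2}$ demands genuine step-one boxes, and large-difference progressions fit awkwardly into the box geometry. Overcoming this --- extracting a uniform admissible correction over an entire unscaled box while holding $G$ fixed and finite --- is the combinatorial core of Beiglb\"ock's argument, and is precisely what the remainder of the paper re-engineers to run in an arbitrary commutative semigroup.
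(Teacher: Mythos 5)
There is a genuine gap, and it sits exactly where you acknowledge deferring to ``the combinatorial core of Beiglb\"ock's argument'': you never supply that core, and the scaffolding you do supply cannot support it. Worse, your specific witness set $G$ provably fails. Take $S=2\mathbb{Z}$ and $k\ge 2$. Then $S$ is syndetic with $F=\{1,2\}$ (so $N=2$), and your condition $s+is'\in\{1,2\}$ for all $0\le i\le k$ forces $s'=0$, since $s$, $s+s'$, $s+2s'$ cannot all lie in a two-element interval unless $s'=0$. Hence $(a,d)\in\bigcup_{g\in G}(-g+P)$ requires $a+s+i d\in 2\mathbb{Z}$ for all $i$ and some fixed $s$, which is impossible whenever $d$ is odd; since every translate of $\{0,1\}^2$ contains a point with odd second coordinate, $\bigcup_{g\in G}(-g+P)$ is not thick. (The theorem itself is fine here --- $P=2\mathbb{Z}\times 2\mathbb{Z}$ is syndetic via the witness $\{0,1\}^2$ --- but that witness does not have the form you prescribe.) The underlying obstruction is structural: your mechanism only ever corrects the progression $a,a+d,\dots,a+kd$ by a progression of shifts lying in $F$, which keeps the common difference $d$ essentially fixed, whereas in general the progression in $S$ witnessing a given $(a,d)$ must have its common difference \emph{rescaled} to a multiple $ud$, not merely perturbed by at most $N$.

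The two ideas needed to handle that rescaling are precisely what the paper supplies and your proposal omits. First, one does not try to show the union of translates of $P$ is thick directly; instead one shows that the set $B$ of pairs for which a long block lands in $T$ is thick, colours each element of $B$ by \emph{which} shift $t_i\in E$ and \emph{which} homothetic sub-copy of $F$ works (van der Waerden, or Lemma \ref{lemHomo} in the general setting, guarantees every element of $B$ receives a colour), and invokes the pigeonhole fact that some colour class $Q$ of a piecewise syndetic set is piecewise syndetic. Second, one needs that the affine map $(a,n)\mapsto(a+n\cdot b+t_i,\,n\cdot u)$ preserves piecewise syndeticity --- this is Lemma \ref{lemMPS}, and it is what converts the rescaled, shifted progressions encoded by $Q$ into the target set. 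Multidimensional van der Waerden, which you propose as the replacement ingredient, is neither used nor sufficient: it cannot repair the fixed-difference geometry imposed by your $G$. As written, the proposal establishes only the easy preliminary facts (the reduction to exhibiting one thick union, and the thickness of $P_T$) and leaves the actual proof unproved.
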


Furstenberg and Glasner's proof was algebraic in nature.
Beiglb\"ock provided an elementary proof of Theorem \ref{FGthm} in \cite{B}.  
In \cite[Theorem 3.7]{BH},
Theorem \ref{FGthm} was extended to more general semigroups and for
various other notions of largeness using ultrafilter techniques. 

\begin{theorem}
\label{BHthm} Let $(S,\cdot)$ be a semigroup, let $l\in\mathbb{N}$,
let $E$ be a subsemigroup of $S^{l}$ with $\left\{ (a,a,\ldots,a):a\in S\right\} \subseteq E$,
and let $I$ be an ideal of $E$. 
If $M$ is a piecewise syndetic subset of $S$, then $M^{l}\cap I$
is piecewise syndetic in $I$.
\end{theorem}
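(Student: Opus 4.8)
The plan is to pass to the Stone--\v{C}ech compactification and reduce the statement to the construction of a single ultrafilter. Work inside $\beta(S^l)$, and recall the standard dictionary (see \cite{HS}): for a discrete semigroup $(T,\cdot)$ a set $A\subseteq T$ is piecewise syndetic in $T$ exactly when $\overline A\cap K(\beta T)\neq\emptyset$, where $K(\beta T)$ is the smallest two-sided ideal of the compact right topological semigroup $\beta T$. Here $\overline E:=\mathrm{cl}_{\beta(S^l)}E$ is a closed subsemigroup, and $\overline I:=\mathrm{cl}_{\beta(S^l)}I$ is a closed ideal of $\overline E$, so $K(\overline E)\subseteq\overline I$ and, since an ideal of a compact right topological semigroup carries the same smallest ideal, $K(\overline I)=K(\overline E)$; under the identification $\overline I\cong\beta I$ this reads $K(\beta I)=K(\overline E)$. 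Consequently it suffices to produce $q\in K(\overline E)$ with $M^l\in q$: then $I\in q$ automatically (because $q\in\overline I$), hence $M^l\cap I\in q$ and $q\in\overline{M^l\cap I}\cap K(\beta I)$, which is the desired conclusion.

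Next I would bring in the largeness of $M$ together with the diagonal $d\colon S\to E$, $d(a)=(a,\dots,a)$, whose continuous extension $\widetilde d\colon\beta S\to\beta(S^l)$ is an algebraic and topological isomorphism onto $\overline\Delta:=\mathrm{cl}(\Delta)\subseteq\overline E$. Since $M$ is piecewise syndetic in $S$, choose $p\in\overline M\cap K(\beta S)$, let $L$ be the minimal left ideal of $\beta S$ containing $p$, choose an idempotent $u\in L$, so that $up=p$; then $M_p:=\{a\in S:a^{-1}M\in p\}$ lies in $u$, so $M_p$ is central --- in particular piecewise syndetic --- in $S$. Unwinding the definition of the operation on $\beta(S^l)$ yields the key identity: for every $s\in\overline E$,
\[
M^l\in s\cdot\widetilde d(p)\quad\Longleftrightarrow\quad (M_p)^l\in s ,
\]
because $z^{-1}M^l\in\widetilde d(p)$ holds precisely when $\bigcap_i z_i^{-1}M\in p$, i.e.\ precisely when $z\in(M_p)^l$. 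Hence, if we can find $s\in K(\overline E)$ with $(M_p)^l\in s$, then $q:=s\cdot\widetilde d(p)\in K(\overline E)\cdot\overline E\subseteq K(\overline E)$ and $M^l\in q$, which finishes the proof.

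The whole theorem therefore reduces to showing that $(M_p)^l\cap E$ is piecewise syndetic in $E$, and this is the step I expect to carry all the difficulty --- it is exactly the point where the proper subsemigroup $E$, rather than $S^l$, must be engaged. The obvious candidates fail: $\widetilde d(u)$ is an idempotent of $\overline E$ with $(M_p)^l\in\widetilde d(u)$, but $\widetilde d(u)$ lies in $K(\overline\Delta)$ and need not lie in $K(\overline E)$, since $\Delta$ is typically far from piecewise syndetic in $E$; and replacing $\widetilde d(u)$ by a minimal idempotent of $\overline E$ lying below it can destroy membership of $(M_p)^l$. The plan to overcome this is to use that $M_p$ is not merely piecewise syndetic but central --- write $M_p\in u=uu$ --- and to combine $\overline\Delta\subseteq\overline E$ with the combinatorial description of piecewise syndeticity (a finite $F\subseteq S$ with $\bigcup_{t\in F}t^{-1}M_p$ thick, pulled back along the constant tuples $d(y)\in\Delta$ and coloured by the cell index, in the spirit of Beiglb\"ock's argument) to manufacture a minimal idempotent $f$ of $\overline E$ with $(M_p)^l\in f$; taking $s:=f$ then completes the chain. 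Everything outside this last point is routine Stone--\v{C}ech bookkeeping, so it is where I would concentrate the work.
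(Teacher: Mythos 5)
First, note that this paper does not actually prove Theorem~\ref{BHthm}: its ``proof'' is a citation of \cite[Theorem 3.7(a)]{BH}, and the elementary content of the paper is confined to the special case $E=I=HC_F$ with $S$ commutative (Corollary~\ref{Maincor}). So your proposal has to be measured against the Bergelson--Hindman ultrafilter argument, whose general strategy you are indeed following. Your Stone--\v{C}ech bookkeeping is correct as far as it goes: $\overline{I}$ is a closed ideal of the closed subsemigroup $\overline{E}$, so $K(\overline{I})=K(\overline{E})\subseteq\overline{I}$ and it suffices to exhibit $q\in K(\overline{E})$ with $M^l\in q$; the identity $M^l\in s\cdot\widetilde{d}(p)\Longleftrightarrow (M_p)^l\in s$ also checks out. (One small slip: an idempotent $u$ in the minimal \emph{left} ideal $L$ containing $p$ satisfies $pu=p$, not $up=p$; to get $up=p$, and hence $M_p\in u$, you must take $u$ to be the identity of the group $R\cap L$, where $R$ is the minimal right ideal containing $p$. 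This is easily repaired.)

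The genuine gap is the step you yourself flag: producing $s\in K(\overline{E})$ with $(M_p)^l\in s$, i.e.\ showing that $(M_p)^l\cap E$ is piecewise syndetic in $E$. That statement is itself an instance of the theorem being proved (take $I=E$ and replace $M$ by $M_p$), so your reduction trades the hypothesis ``piecewise syndetic'' for the stronger ``central'' but otherwise returns you to the starting point; nothing in the proposal breaks this circle. You correctly diagnose why the obvious candidates fail---$\widetilde{d}(u)$ lies in $K(\overline{\Delta})$ but $\overline{\Delta}$ generally misses $K(\overline{E})$ (already for $S=\mathbb{N}$, $l=2$, $E=S^2$ the diagonal is not piecewise syndetic in $E$), and a minimal idempotent of $\overline{E}$ below $\widetilde{d}(u)$ need not contain $(M_p)^l$---but the proposed remedy (``manufacture a minimal idempotent $f$ of $\overline{E}$ with $(M_p)^l\in f$, in the spirit of Beiglb\"ock's argument'') is precisely the entire difficulty of the theorem and is left unexecuted; it is not even clear in what topological-algebraic form the Hales--Jewett-style colouring you allude to would enter. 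As it stands the proposal is a correct reduction plus an accurate map of where the work lies, not a proof.
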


\begin{proof}
\cite[Theorem 3.7(a)]{BH}.
\end{proof}

In this article we will extend that technique of Beiglb\"ock to commutative
semigroups and provide an elementary proof of a special case of Theorem
\ref{BHthm} which is Corollary \ref{Maincor}.

Conventionally $[t]$ denotes the set $\{1,2,\ldots,t\}$ and words
of length $N$ over the alphabet $[t]$ are the elements of $[t]^{N}$.
A {\it variable word\/} is a word over $[t]\cup\{*\}$ in which $*$ occurs
at least once and $*$ denotes the variable. A combinatorial line
is denoted by $L_{\tau}=\{\tau(1),\tau(2),\ldots,\tau(t)\}$ where
$\tau(*)$ is a variable word and for 
$i\in\{1,2,\ldots,t\}$, $\tau(i)$ is obtained by replacing
each occurrence of $*$ by $i$.

The following theorem is due to Hales and Jewett.
\begin{theorem}
\label{HJthm} For all values $t,r\in\mathbb{N}$, there exists a
number $HJ(r,t)$ such that, if $N\geq HJ(r,t)$ and $[t]^{N}$ is
$r$ colored then there will exists a monochromatic combinatorial
line.
\end{theorem}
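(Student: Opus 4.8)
This is the classical Hales--Jewett theorem; I sketch a proof by induction on the alphabet size $t$, following Shelah's argument, which stays entirely finitary. When $t=1$ the cube $[t]^N=\{(1,1,\ldots,1)\}$ is a single point, and for every $N\geq 1$ this point is itself a monochromatic line (take the variable word $\tau=(\ast,\ast,\ldots,\ast)$), so $HJ(r,1)=1$ works. For the inductive step assume $HJ(r,t-1)$ exists for every number of colours, and put $m=HJ(r,t-1)$. I would then produce $N$ of the form $N=n_1+n_2+\cdots+n_m$, identify $[t]^N$ with $[t]^{n_1}\times[t]^{n_2}\times\cdots\times[t]^{n_m}$, and regard the coordinates as split into $m$ consecutive blocks. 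The block sizes are fixed in the order $n_1,n_2,\ldots,n_m$: having chosen $n_1,\ldots,n_{i-1}$, choose $n_i$ so that $n_i+1$ strictly exceeds the number of $r$-colourings of the finite set $[t]^{n_1+\cdots+n_{i-1}}\times[t]^{\,m-i}$, where the first factor records the ``raw'' content of the earlier blocks and the second records one symbol per later block. Thus $n_1$ is the smallest and $n_m$ by far the largest.

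Given an $r$-colouring $\chi$ of $[t]^N$, the heart of the matter is a \emph{focusing} construction, carried out on the blocks in the reverse order, from block $m$ down to block $1$. When block $i$ is reached, nonempty position sets $V_k\subseteq\{1,\ldots,n_k\}$ and words $w_k\in[t]^{n_k}$ have already been chosen for all $k>i$; write $y^{(k)}_j\in[t]^{n_k}$ for the word equal to $j$ on $V_k$ and equal to $w_k$ elsewhere. Inside block $i$, consider the $n_i+1$ staircase words $\mu_0,\mu_1,\ldots,\mu_{n_i}$, where $\mu_s$ is the constant $t$ on the first $s$ positions of the block and the variable $\ast$ on the remaining positions. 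Each $\mu_s$, with the value $1$ substituted for its variable, yields the word $\mu_s(1)\in[t]^{n_i}$ and hence an $r$-colouring of $[t]^{n_1+\cdots+n_{i-1}}\times[t]^{\,m-i}$, namely $(d,(j_{i+1},\ldots,j_m))\mapsto\chi\bigl(d,\mu_s(1),y^{(i+1)}_{j_{i+1}},\ldots,y^{(m)}_{j_m}\bigr)$. By the choice of $n_i$ two of these colourings coincide, say for $s<s'$. I would then set $V_i=\{s+1,s+2,\ldots,s'\}$ (nonempty) and let $w_i$ be the word equal to $t$ on positions $1,\ldots,s$ and equal to $1$ on positions $s'+1,\ldots,n_i$. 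A direct check gives $y^{(i)}_1=\mu_s(1)$ and $y^{(i)}_t=\mu_{s'}(1)$, so the coinciding-colourings conclusion becomes the \emph{focusing property}: the value of $\chi$ is unchanged when block $i$ is switched between $y^{(i)}_1$ and $y^{(i)}_t$, whatever the other blocks are --- the earlier blocks set to arbitrary words, each later block $k$ set to some $y^{(k)}_{j_k}$. After all $m$ blocks are processed, the $t^m$ points $\bigl(y^{(1)}_{j_1},\ldots,y^{(m)}_{j_m}\bigr)$ with $(j_1,\ldots,j_m)\in[t]^m$ form a combinatorial subspace, and $\phi(j_1,\ldots,j_m):=\chi\bigl(y^{(1)}_{j_1},\ldots,y^{(m)}_{j_m}\bigr)$ is an $r$-colouring of $[t]^m$.

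Since $m=HJ(r,t-1)$, the restriction of $\phi$ to $[t-1]^m$ contains a monochromatic combinatorial line: there are a nonempty $W\subseteq\{1,\ldots,m\}$, values $c_k\in[t-1]$ for $k\notin W$, and a colour $\gamma$ such that the point $p(a)$ defined by $p(a)_k=a$ for $k\in W$ and $p(a)_k=c_k$ otherwise satisfies $\phi(p(a))=\gamma$ for every $a\in[t-1]$. Pulling this back, $\bigl\{\bigl(y^{(1)}_{p(a)_1},\ldots,y^{(m)}_{p(a)_m}\bigr):a\in[t]\bigr\}$ is a combinatorial line in $[t]^N$: its variable coordinates are $\bigcup_{k\in W}V_k\neq\varnothing$, and all its points agree off that set. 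It remains to see that the point with $a=t$ also gets colour $\gamma$. Starting from the $a=1$ point (which has colour $\gamma$, as $1\in[t-1]$), I would flip the blocks indexed by $W$ from value $1$ to value $t$ one at a time, in increasing order of index; each flip leaves $\chi$ unchanged by the focusing property, since at the moment a block $k\in W$ is flipped every other block --- of larger or smaller index, in $W$ or not --- is sitting at a word of the form $y^{(l)}_{\cdot}$, exactly the freedom the focusing property grants. Hence $\phi(p(t))=\gamma$, the pulled-back line is monochromatic, and $N=n_1+\cdots+n_m$ serves as $HJ(r,t)$.

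The step I expect to be the real obstacle is getting the focusing construction and its bookkeeping exactly right: one must coordinate the two orderings (block sizes fixed from $1$ to $m$, the focusing carried out from $m$ down to $1$) so that the pigeonhole in each block is run against precisely the right finite family of induced colourings, and one must define $V_i$ and $w_i$ so that the focusing invariance holds with exactly the generality the closing telescoping needs --- a single block flipped between its $1$-value and its $t$-value while every other block is frozen at an arbitrary value of the constructed subspace. Once that is pinned down, the counting that fixes the $n_i$ and the verification that the pulled-back configuration is a genuine combinatorial line (nonempty variable set, agreement off it) are routine.
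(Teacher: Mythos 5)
Your argument is correct, but be aware that the paper does not actually prove this statement: its ``proof'' is a bare citation of Hales and Jewett's original article, so the comparison here is between your argument and the standard literature rather than with anything in the text. What you have written is a faithful sketch of Shelah's proof, and the details check out: the condition on $n_i$ depends only on $n_1,\ldots,n_{i-1}$ and on $m-i$, so fixing the block sizes in increasing order of index while running the focusing in decreasing order is legitimate; the identifications $y^{(i)}_1=\mu_s(1)$ and $y^{(i)}_t=\mu_{s'}(1)$ are exactly right; the focusing property is extracted with precisely the generality the final telescoping needs (earlier blocks arbitrary, later blocks frozen at the $t$ subspace values), and at the moment each block of $W$ is flipped every other block is indeed sitting at a word of the form $y^{(l)}_{j_l}$. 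The pulled-back set is a genuine combinatorial line in the paper's sense since $\bigcup_{k\in W}V_k\neq\varnothing$. The contrast worth recording: the original Hales--Jewett argument behind the cited source is a double induction (on the alphabet size and, within that, a colour-focusing induction) yielding Ackermann-type bounds, whereas Shelah's single induction on $t$, which you follow, gives primitive recursive bounds and is self-contained and finitary --- arguably more in the spirit of this paper's stated goal of elementary proofs. The bookkeeping you flag at the end as the ``real obstacle'' is already pinned down correctly in your sketch.
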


\begin{proof} \cite[Theorem 1]{HJ}. \end{proof}

The above theorem may be strengthening by requiring that the variable
word include at least one constant.

\begin{lemma}
\label{HJnew} Let $r,m\in\mathbb{N}$. There exists 
some $N\in\mathbb{N}$ such that whenever the set of length $N$ words over
$[m]$ is r-colored, there is a variable word $w(*)$ such that
$w(*)$ begins and ends with a constant, and $\{w(a):a\in [m]\}$ is monochromatic.
\end{lemma}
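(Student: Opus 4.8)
The plan is to derive this strengthening from the plain Hales--Jewett theorem (Theorem \ref{HJthm}) by a padding argument: reserve the first and last coordinates to carry a fixed constant, and run Hales--Jewett on the remaining coordinates.

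Concretely, given $r,m\in\mathbb{N}$, set $N_0=HJ(r,m)$ and let $N=N_0+2$. Suppose $\chi\colon[m]^N\to[r]$ is an arbitrary $r$-coloring of the length-$N$ words over $[m]$. Fix the constant $1\in[m]$ (any fixed element of $[m]$ works) and define an induced coloring $\chi'\colon[m]^{N_0}\to[r]$ by $\chi'(u)=\chi(1\,u\,1)$; that is, prepend and append the symbol $1$ to the length-$N_0$ word $u$ and read off the value of $\chi$ on the resulting length-$N$ word. Apply Theorem \ref{HJthm} to $\chi'$: since $N_0=HJ(r,m)$, there is a variable word $v(*)$ over $[m]\cup\{*\}$ of length $N_0$ such that the combinatorial line $\{v(a):a\in[m]\}$ is $\chi'$-monochromatic.

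Now put $w(*)=1\,v(*)\,1$, a word over $[m]\cup\{*\}$ of length $N$. Since $v(*)$ contains at least one occurrence of $*$, so does $w(*)$, so $w(*)$ is a genuine variable word; and by construction it begins and ends with the constant $1$. For each $a\in[m]$ we have $w(a)=1\,v(a)\,1$, hence $\chi(w(a))=\chi'(v(a))$, which does not depend on $a$; therefore $\{w(a):a\in[m]\}$ is $\chi$-monochromatic, which is what was required.

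There is essentially no obstacle here: the combinatorial content is entirely carried by Theorem \ref{HJthm}, and the passage to a variable word with constant endpoints amounts to a trivial reindexing. The only thing to verify is that prepending and appending a constant coordinate does not spoil the ``$*$ occurs at least once'' condition, and this is immediate because only constant coordinates are added. (If one wanted the leading and trailing constants to be distinct one could take $w(*)=c\,v(*)\,c'$ for any prescribed $c,c'\in[m]$; the same proof applies verbatim.)
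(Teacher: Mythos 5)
Your proof is correct, but it takes a different route from the paper: the paper disposes of this lemma with a one-line citation to \cite[Lemma 14.8.1]{HS} (a variable-word strengthening of Hales--Jewett already available in that reference), whereas you derive it directly from the plain Hales--Jewett theorem (Theorem \ref{HJthm}) by the padding trick $N=HJ(r,m)+2$, $\chi'(u)=\chi(1\,u\,1)$, $w(*)=1\,v(*)\,1$. The argument is sound: $\chi'$ is an $r$-coloring of $[m]^{N_0}$ with $N_0=HJ(r,m)$, so Theorem \ref{HJthm} yields a $\chi'$-monochromatic line $\{v(a):a\in[m]\}$; padding with the constant $1$ preserves the ``$*$ occurs at least once'' condition and forces the first and last letters to be constants, and $\chi(w(a))=\chi'(v(a))$ is independent of $a$. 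What your approach buys is self-containment --- the paper advertises elementary, combinatorial arguments, and your derivation keeps the entire chain elementary modulo only Theorem \ref{HJthm} itself, rather than importing a stronger black box; what the citation buys is brevity. One presentational note: the paper's subsequent use of this lemma (in Lemma \ref{lemHomo}) only needs the variable word to \emph{begin} with a constant, which your construction of course also supplies.
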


\begin{proof} This is an immediate consequence of 
\cite[Lemma 14.8.1]{HS}. \end{proof}

The following lemma will be used in our proof of the main theorem.

\begin{lemma}
\label{lemHomo} Let $(S,+)$ be a commutative semigroup, 
let $F$ be a finite nonempty subset of $S$, and let $r\in\mathbb{N}$. 
There exists a finite subset $A$ of $S$ such that
for any $r$-coloring of $A$, there exist a monochromatic homothetic
copy of $F$.
\end{lemma}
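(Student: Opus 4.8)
The plan is to deduce Lemma~\ref{lemHomo} from the Hales--Jewett theorem in its strengthened form, Lemma~\ref{HJnew}. Write $F=\{f_1,f_2,\ldots,f_m\}$, so $|F|=m$. Given an $N$ to be chosen, I will encode each word $w=(w_1,\ldots,w_N)\in[m]^N$ as the element
\[
\varphi(w)=\sum_{j=1}^{N} f_{w_j}\in S,
\]
the sum of the $N$ semigroup elements named by the letters of $w$; commutativity of $(S,+)$ guarantees this is well defined and, more importantly, that $\varphi$ behaves well on combinatorial lines. The candidate finite set is $A=\varphi\bigl([m]^N\bigr)$, a finite subset of $S$. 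Any $r$-coloring of $A$ pulls back along $\varphi$ to an $r$-coloring of $[m]^N$.

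Now apply Lemma~\ref{HJnew} with this $r$ and this $m$: choosing $N$ at least the guaranteed bound, we obtain a variable word $w(*)$ that begins and ends with a constant such that $\{w(a):a\in[m]\}$ is monochromatic under the pulled-back coloring, hence $\{\varphi(w(a)):a\in[m]\}$ is monochromatic in $A$. The key computation is that this monochromatic combinatorial line maps under $\varphi$ to a homothetic copy of $F$. Indeed, let $c$ be the sum of $f_{w_j}$ over the constant positions $j$ of $w(*)$, and let $n\ge 1$ be the number of positions occupied by $*$. Since every occurrence of $*$ is replaced by the same letter $a$, we get
\[
\varphi(w(a))=c+\sum_{\ast\text{-positions}} f_a = c + n\cdot f_a,
\]
so $\{\varphi(w(a)):a\in[m]\}=c+n\cdot F$, a homothetic copy of $F$ with base point $c$ and dilation $n$ (here $n\ge1$, as required, because $*$ occurs at least once). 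Thus $A$ contains a monochromatic homothetic copy of $F$, completing the proof.

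The one point that needs care — and the only real obstacle — is that a homothetic copy $c+n\cdot F$ must have $m$ \emph{distinct} elements to be a genuine copy of $F$; a priori the map $a\mapsto c+n\cdot f_a$ could collapse. This is precisely why Lemma~\ref{HJnew} is invoked rather than the bare Hales--Jewett theorem: the requirement that $w(*)$ begin and end with a \emph{constant} ensures there is at least one constant letter, but the feature we actually exploit is more subtle. If $c+n\cdot f_a = c+n\cdot f_b$ could happen in $S$ for $a\ne b$, the argument as stated would only produce a homothetic copy of a proper quotient of $F$. In a cancellative semigroup this cannot occur, and in general one can absorb this by noting that the original $r$-coloring of $A$ restricted to the $m$-point set $\{c+n\cdot f_a\}$ is constant regardless of coincidences, and by enlarging $N$ and iterating the line-finding one can force the dilation and base point into a regime where the $m$ points are separated; alternatively one restricts attention to the standard setting where $F$ itself is chosen so that all relevant homothetic images remain $m$-element sets. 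I would handle this by stating the lemma (as the authors do) for the existence of a monochromatic homothetic copy and letting the definition of $\mathrm{Homth}_A(F)$ carry the convention that degenerate images are permitted when $S$ is not cancellative, so that the $\varphi$-image of the Hales--Jewett line is exactly what is claimed.
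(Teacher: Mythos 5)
Your proof is correct and follows essentially the same route as the paper: encode words over $[m]$ as sums of elements of $F$, pull back the coloring, and apply Lemma~\ref{HJnew} so the resulting line maps to $c+n\cdot F$. One small clarification: the paper's definition of a homothetic copy is simply the set $\{a+n\cdot x:x\in F\}$, so your worry about collapsing is moot, and the actual reason for requiring a constant letter in $w(*)$ is that the base point $c$ must be a nonempty sum (a semigroup need not have an identity), which is exactly how the paper uses it.
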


\begin{proof} Let $m=|F|$ and enumerate $F$ as 
$\{s_1,s_2,\ldots,s_m\}$. Let $N$ be as guaranteed by
Lemma \ref{HJnew} for $r$ and $m$. Let
$A=\{\sum_{j=1}^Na_j:$ each $a_j\in F\}$ and let
$\chi:A\to\{1,2,\ldots,r\}$ be an $r$-coloring of $A$.
Define an $r$-coloring $\chi'$ of $[m]^N$ by
$\chi'(x)=\chi\big(\sum_{j=1}^Nx(j)\big)$ for 
$x=\big(x(1),x(2),\ldots,x(N)\big)\in [m]^N$.
Pick a variable word $w(*)=w_1w_2\cdots w_N$ which begins with a constant
and pick $l\in\{1,2,\ldots,r\}$ such that for each
$t\in\{1,2,\ldots,m\}$, $\chi'\big(w(t)\big)=l$.

Let $I=\{j\in\{1,2,\ldots,N\}:w_j=*\}$.
Let $a=\sum_{j\in[N]\setminus I}s_{w_j}$ and
let $n=|I|$.  Since $w(*)$ begins with a constant, $[N]\setminus I\neq
\emptyset$ so $a\in S$.  Then for 
$t\in \{1,2,\ldots,m\}$,
$l=\chi'\big(w(t)\big)=\chi(\sum_{i\in[N]\setminus I}s_{w_i}+
\sum_{i\in I}s_t)=\chi(a+n\cdot s_t)$. \end{proof}

For a commutative semigroup $(S,+)$, consider 
$(S\times\mathbb{N},+)$ with operation defined by $(s_{1},n_{1})+(s_{2},n_{2})=(s_{1}+s_{2},n_{1}+n_{2})$.
Then $(S\times\mathbb{N},+)$ is a commutative semigroup.

Using essentially the same proof that established that piecewise syndetic
subsets of $\mathbb{N}$ contain arbitrarily long arithmetic 
progressions, one can derive the following theorem from Lemma
\ref{lemHomo}. (We will not be using this theorem, which is
also a consequence of Theorem \ref{thmMain}.)

\begin{theorem}
Let $(S,+)$ be a commutative semigroup and let $F$
be a finite nonempty subset of  $S$.
Then for any piecewise syndetic set $M\subseteq S$
the collection $\{(a,n)\in S\times\mathbb{N}:\,a+n\cdot F\subseteq M\}$
is non empty.
\end{theorem}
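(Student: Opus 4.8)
The plan is to replay, almost verbatim, the derivation recalled in the introduction that piecewise syndetic subsets of $\mathbb{N}$ contain arbitrarily long arithmetic progressions, with Lemma~\ref{lemHomo} playing the role that van der Waerden's theorem played there. So I would begin by unpacking piecewise syndeticity of $M$: fix a finite nonempty set $G\subseteq S$ such that $T:=\bigcup_{t\in G}(-t+M)$ is thick in $S$, and set $r=|G|$, writing $G=\{t_1,t_2,\ldots,t_r\}$.

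Next I would invoke Lemma~\ref{lemHomo} with this $r$ and the given finite set $F$ to produce a finite nonempty subset $A\subseteq S$ with the property that every $r$-coloring of $A$ contains a monochromatic homothetic copy of $F$. Since $T$ is thick and $A$ is finite and nonempty, there is $x\in S$ with $A+x\subseteq T$. For each $s\in A$ we then have $s+x\in -t_i+M$, i.e. $t_i+s+x\in M$, for at least one $i\in\{1,\ldots,r\}$; define a coloring $\chi\colon A\to\{1,\ldots,r\}$ by letting $\chi(s)$ be the least such index $i$.

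Applying the defining property of $A$ to $\chi$ yields $a\in S$ and $n\in\mathbb{N}$ with $a+n\cdot F\subseteq A$ on which $\chi$ is constant, say $\chi\equiv i$. Thus $t_i+(a+n\cdot y)+x\in M$ for every $y\in F$. Setting $a'=t_i+a+x\in S$ and using commutativity and associativity to rewrite $t_i+(a+n\cdot y)+x=a'+n\cdot y$, we obtain $a'+n\cdot F\subseteq M$, so $(a',n)$ witnesses that the collection in question is nonempty.

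I do not expect a serious obstacle here: the only points that need a word of care are that the translate $x$ is available because $A$ is finite and nonempty — nonemptiness of $A$ is clear from the construction in Lemma~\ref{lemHomo}, since $F\ne\emptyset$ forces $[m]^N\ne\emptyset$ — and that the homothetic copy produced by Lemma~\ref{lemHomo} genuinely sits inside $A$, so that $\chi$ is defined on all of it; both are immediate from that lemma. Everything else is the same bookkeeping with the sets $-t+M$ and with commutativity that already appears in the $\mathbb{N}$ case in the introduction, now with $S\times\mathbb{N}$ implicitly in the background through the pair $(a',n)$.
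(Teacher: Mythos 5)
Your proposal is correct and is precisely the argument the paper has in mind: it only sketches this theorem by remarking that one repeats the introduction's van der Waerden argument with Lemma~\ref{lemHomo} in place of van der Waerden's theorem, and your write-up fills in exactly that sketch (thickness of $\bigcup_{t\in G}(-t+M)$ to translate the finite set $A$ inside, the $r$-coloring by least index $t_i$, and the commutativity bookkeeping to move $t_i$ and $x$ into the base point $a'$). No gaps.
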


The following is the main result of this paper.

\begin{theorem}
\label{thmMain} Let $(S,+)$ be a commutative semigroup, let $F$ be a
finite nonempty subset of $S$, and let $M$ be a piecewise syndetic subset
of $S$. Then the collection $\{(a,n)\in S\times\mathbb{N}:\,a+n\cdot F\subseteq M\}$
is piecewise syndetic in $(S\times\mathbb{N},+)$.
\end{theorem}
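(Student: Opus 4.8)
The plan is to mimic the classical deduction that a piecewise syndetic set contains long progressions, but with van der Waerden's theorem replaced by Lemma~\ref{lemHomo} and with the bookkeeping done in $S\times\mathbb{N}$ rather than in $S$. Write $M$ piecewise syndetic, so fix a finite nonempty $G\subseteq S$ with $T:=\bigcup_{t\in G}(-t+M)$ thick in $S$. Let $r=|G|$ and let $A\subseteq S$ be the finite set furnished by Lemma~\ref{lemHomo} for $F$ and $r$; enlarge $A$ if necessary so that it is nonempty, and note that the conclusion of that lemma is unaffected by enlarging $A$ (any $r$-coloring of the larger set restricts to one of the original $A$, wait --- actually one must be slightly careful here: enlarging $A$ is fine because a monochromatic homothetic copy of $F$ inside the original $A$ is still one inside the larger set; so I may and do assume $A\supseteq G$, or more usefully I will just use $A$ as given).

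Now I would use thickness of $T$ to capture a translate of a suitably large finite block. The key point: given $(a,n)$ with $a+n\cdot F\subseteq M$, the progression ``centre-and-ratio'' pair lives in $S\times\mathbb{N}$, and I want to show the set $R:=\{(a,n)\in S\times\mathbb{N}: a+n\cdot F\subseteq M\}$ is piecewise syndetic. So I need (i) a finite $H\subseteq S\times\mathbb{N}$ with $\bigcup_{h\in H}(-h+R)$ thick in $S\times\mathbb{N}$. Thickness in $S\times\mathbb{N}$ means: for every finite $E\subseteq S\times\mathbb{N}$ there is $(x,\nu)$ with $E+(x,\nu)\subseteq \bigcup_{h\in H}(-h+R)$. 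Fix such an $E$; write $E\subseteq E_S\times\{1,\dots,\nu_0\}$ for a finite $E_S\subseteq S$ and some $\nu_0$. Apply Lemma~\ref{lemHomo}'s set $A$ together with the structure: for each coloring we get a homothetic copy $a+n\cdot F$ with $1\le n\le $ (a bound depending only on $A$ and $F$, since $n=|I|\le N$). This is where the product with $\mathbb{N}$ is used --- the ``$n$'' coordinate stays in a fixed finite range $\{1,\dots,N\}$, so translating by $(x,\nu)$ with $\nu$ large pushes the second coordinate of the progression pair into a controlled window and we can absorb it with a finite $H$ in the $\mathbb{N}$-direction, namely $H\supseteq\{0,\dots\}\times\{\text{something}\}$; concretely I expect $H = \{0\}\times\{1,2,\dots,N\}$ roughly, paired with a finite set of first coordinates coming from $A+G$.

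In detail, the core combinatorial step: given the finite $E_S\subseteq S$, by thickness of $T$ pick $y\in S$ with $(E_S + A + A)+y\subseteq T = \bigcup_{t\in G}(-t+M)$ (I take $E_S+A+A$ or whatever finite set I need; the precise finite set is $\{e+s : e\in E_S, s\in A\}$ enlarged to also cover all needed sums). Define an $r$-coloring $\chi$ of $A$ (or of $E_S+A$, translated) by $\chi(s)=$ some $t\in G$ with $s+y\in -t+M$, i.e.\ $s+y+t\in M$. By Lemma~\ref{lemHomo} there is a monochromatic homothetic copy $a+n\cdot F\subseteq A$, say all colored $t\in G$, with $1\le n\le N$; then $a+y+t+n\cdot F\subseteq M$, so $(a+y+t,\,n)\in R$. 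To get thickness I must produce, for each prescribed ``shift'' $e\in E_S$, a corresponding membership; so I should run the coloring argument on the translated set $E_S + A$ (coloring $s\mapsto$ witness for $s+y$), apply the Hales--Jewett machinery inside each translate $e+A$ --- but Lemma~\ref{lemHomo} as stated gives \emph{one} monochromatic copy in \emph{one} finite set $A$, not a copy in each translate simultaneously.

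\textbf{The main obstacle}, as I see it, is exactly this last point: upgrading ``there is a monochromatic homothetic copy somewhere in $A$'' to something uniform enough to yield thickness of $\bigcup_{h\in H}(-h+R)$ for a \emph{single} finite $H$. The standard trick (and what I would carry out) is to choose $A$ itself via Lemma~\ref{lemHomo} applied not to $F$ but to a larger finite configuration, or more cleanly: given the finite $E\subseteq S\times\mathbb{N}$ to be absorbed, let $E_S$ be its first-coordinate projection, and build the coloring on the finite set $A'=\bigcup_{e\in E_S}(e+A)$ --- no, rather, one applies Lemma~\ref{lemHomo} with $r' = r^{|E_S|}$ colors to a single $A$, color a point $s\in A$ by the tuple $(\text{witness for } e+s+y)_{e\in E_S}$, extract a homothetic copy $a+n\cdot F\subseteq A$ monochromatic for this refined coloring, so that there is a \emph{single} assignment $e\mapsto t_e\in G$ with $(e+a+y+t_e) + n\cdot F\subseteq M$ for all $e\in E_S$ simultaneously. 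Then with $(x,\nu):=(a+y,\,n)$ we get $e+(a+y,\,\text{---})$... the second coordinate needs care, but taking $H=\{(t,0):t\in G\}$ and noting $(e,0)+(a+y,n) = (e+a+y,\,n)\in -(\,?\,)+R$: since $(e+a+y+t_e,\,n)\in R$ we have $(e+a+y,\,n)\in -(t_e,0)+R\subseteq\bigcup_{t\in G}\big(-(t,0)+R\big)$. Hence $E + (a+y,n)\subseteq E_S\times\{1,\dots,\nu_0\} + (a+y,n)$, and each such point lies in $\bigcup_{t\in G}(-(t,0)+R)$ provided we also shifted the second coordinate appropriately --- here I would simply note $R$ is ``upward closed enough'' or instead absorb the $\nu_0$ by enlarging $H$ to $\{(t,j):t\in G,\ 0\le j\le \nu_0\}$, wait that breaks finiteness control --- actually $\nu_0$ is fixed by the given $E$, and for thickness we get to choose $(x,\nu)$ \emph{depending on} $E$, so taking $\nu := n$ works and the second coordinates $E + (x,\nu)$ all equal (second coord of $e$) $+ n \le \nu_0 + n$, still fine since the witness pairs $(e+a+y+t_e,n)$ have second coordinate exactly $n$ --- mismatch! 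So the correct move is: absorb the second coordinate of $e$ into $H$ by enlarging the homothety: replace $F$ by $F$ and run the argument so that the pair recorded is $(e+a+y+t_e,\ n + (\text{second coord of }e))$? No. The clean fix is that we do \emph{not} need $E+(x,\nu)\subseteq R$ directly; we need it inside $\bigcup_{h\in H}(-h+R)$, so set $H = \{(t, j) : t\in G,\ 0\le j\le \nu_0^{\max}\}$ where $\nu_0^{\max}$ is... but $\nu_0$ varies with $E$. The resolution, which I would implement carefully, is that thickness of the image set is tested against each finite $E$ \emph{one at a time}, and $H$ must be fixed in advance; so one fixes $H=\{(t,0):t\in G\}$ and proves thickness of $\bigcup_{t\in G}(-(t,0)+R)$ by, for given $E$, choosing $(x,\nu)$ with $\nu$ \emph{larger} than all second coordinates in $E$ and arranging via Lemma~\ref{lemHomo} applied to a big enough $A$ that the ratio $n$ can be taken equal to $\nu - (\text{any prescribed second coord})$ --- i.e.\ we need homothetic copies with a \emph{prescribed} large $n$, which Lemma~\ref{lemHomo} does not give. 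Therefore the genuinely right approach, and the one I would write up, is to first prove the analogue of Lemma~\ref{lemHomo} in $S\times\mathbb{N}$ for the finite configuration $F\times\{0\}$ together with enough ``vertical room'', deduce that $R$ contains a homothetic copy of an arbitrary finite subset of a suitable finite $A\subseteq S\times\mathbb{N}$ in each $r$-coloring, and then run the \emph{verbatim} piecewise-syndetic-implies-contains-progressions argument (the one displayed in the introduction for $\mathbb{N}$, with van der Waerden replaced by this new lemma) inside $S\times\mathbb{N}$ with $M\times\mathbb{N}$... and this is precisely the strategy the paper signals by introducing $(S\times\mathbb{N},+)$ and Lemma~\ref{lemHomo}. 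So: \emph{Step 1}, restate and apply Lemma~\ref{lemHomo} over $S\times\mathbb{N}$; \emph{Step 2}, check $M$ piecewise syndetic in $S$ forces a related set piecewise syndetic in $S\times\mathbb{N}$ whose intersection with a thick set is controlled; \emph{Step 3}, plug into the classical deduction; and \emph{Step 2} (relating largeness in $S$ to largeness in $S\times\mathbb{N}$ while keeping the homothety ratio in a bounded window) is where all the real work is, and is the obstacle I would budget the most care for.
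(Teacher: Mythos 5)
There is a genuine gap: your write-up never arrives at a proof, and the obstacle you correctly identify --- that Lemma~\ref{lemHomo} produces a homothetic copy whose ratio $n$ lies in an uncontrollable finite window, so you cannot match the second coordinate when trying to exhibit a single finite $H$ with $\bigcup_{h\in H}(-h+R)$ thick --- is fatal to the direct approach you pursue. You end by conceding that the key step ``is where all the real work is,'' which is an accurate self-assessment but not a proof. The decisive idea you are missing is that one should not try to verify the definition of piecewise syndeticity for $R$ head-on at all.

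The paper's architecture avoids your obstacle entirely. It considers the set $B=\{(a,n)\in S\times\mathbb{N}: a+n\cdot A\subseteq\bigcup_{t\in E}(-t+M)\}$ of \emph{dilated} blocks and shows $B$ is thick in $S\times\mathbb{N}$ (this is easy: cover a finite $C$ by $G=\bigcup_{(a,n)\in C}(a+(n+1)\cdot A)$ and translate into the thick union). Each $(a,n)\in B$ induces an $r$-coloring of $A$ via which $-t_i+M$ captures $a+n\cdot x$, so Lemma~\ref{lemHomo} yields a monochromatic homothetic copy $b+u\cdot F\in\HO_A(F)$ with $a+n\cdot(b+u\cdot F)\subseteq -t_i+M$. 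Recording the pair $(t_i,\, b+u\cdot F)$ gives a \emph{finite} coloring of the thick (hence piecewise syndetic) set $B$, so some cell $Q$ with a \emph{fixed} $(t_i, b+u\cdot F)$ is piecewise syndetic. Now every $(a,n)\in Q$ satisfies $(a+n\cdot b+t_i)+(nu)\cdot F\subseteq M$, and the crucial final step is Lemma~\ref{lemMPS}: the map $(a,n)\mapsto(a+n\cdot b+t_i,\, n\cdot u)$ preserves piecewise syndeticity, so its image $\widetilde Q\subseteq R$ is piecewise syndetic. The ratio is never prescribed --- it is $n\cdot u$ with $u$ constant on $Q$ --- which is exactly why the second-coordinate mismatch you fought with never arises. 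Your proposal contains neither the pigeonhole onto a piecewise syndetic cell with fixed homothetic copy nor any analogue of Lemma~\ref{lemMPS}, and without these the argument cannot be completed along the lines you sketch.
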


\section{Proof of Theorem \ref{thmMain}}

In this section we will prove Theorem \ref{thmMain}, enhancing the
combinatorial arguments used in \cite{B}. 
The following lemma
will be needed for our purposes. This lemma is
\cite[Lemma 4.6($I'$)]{BG}, and was proved by using the
algebraic structure of the Stone-\v{C}ech compactification of an arbitrary
semigroup. Since we have promised a combinatorial proof, we will
give a purely elementary proof of the above lemma for commutative
semigroups. We thank the anonymous referee for simplifying the
proof.

\begin{lemma}
\label{lemBG} Let $(S,\cdot)$ and $(T,.)$ be semigroups, let
$\varphi:S\to T$ be a homomorphism, and let $A\subseteq$S. 
If $A$ is piecewise syndetic in $S$ and $\varphi(S)$ is piecewise
syndetic in $T$, then $\varphi(A)$ is piecewise syndetic
in $T$.
\end{lemma}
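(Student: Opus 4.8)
The plan is to reduce the statement about piecewise syndeticity to its definition in terms of thick and syndetic sets, and push each piece through the homomorphism $\varphi$ separately. First I would recall that $A\subseteq S$ piecewise syndetic means there is a finite $F\subseteq S$ with $\bigcup_{t\in F}(-t+A)$ thick in $S$; equivalently, by \cite[Theorem 4.49]{HS}, $A=B\cap C$ where $B$ is thick and $C$ is syndetic. I would work with the first formulation. So pick finite $F\subseteq S$ with $U:=\bigcup_{t\in F}(t^{-1}A)$ thick in $S$. The goal is to produce a finite $G\subseteq T$ with $\bigcup_{s\in G}(s^{-1}\varphi(A))$ thick in $T$, using the hypothesis that $\varphi(S)$ is piecewise syndetic in $T$.

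The key observation is that $\varphi$ maps thick sets of $S$ into sets that are ``thick relative to $\varphi(S)$'': if $U$ is thick in $S$ and $E\subseteq\varphi(S)$ is finite, write $E=\{\varphi(e_1),\ldots,\varphi(e_k)\}$, apply thickness of $U$ to $\{e_1,\ldots,e_k\}$ to get $x\in S$ with $\{e_1,\ldots,e_k\}\cdot x\subseteq U$, and then $E\cdot\varphi(x)=\varphi(\{e_1,\ldots,e_k\}\cdot x)\subseteq\varphi(U)$. So $\varphi(U)$ is thick in the sub-semigroup $\varphi(S)$ (with its induced operation). Next, since $\varphi(S)$ is piecewise syndetic in $T$, there is a finite $H\subseteq T$ such that $\bigcup_{h\in H}(h^{-1}\varphi(S))$ is thick in $T$; combining this with the fact that $\varphi(U)$ is thick in $\varphi(S)$, one shows $\bigcup_{h\in H}\bigl(h^{-1}\varphi(U)\bigr)$ is thick in $T$ — the point being that a thick subset of a piecewise syndetic sub-semigroup, after translating by the finite set witnessing piecewise syndeticity, remains thick in the ambient semigroup. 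Finally, $\varphi(U)=\varphi\bigl(\bigcup_{t\in F}(t^{-1}A)\bigr)\subseteq\bigcup_{t\in F}\varphi(t)^{-1}\varphi(A)$, because $s\in t^{-1}A$ gives $t\cdot s\in A$, hence $\varphi(t)\cdot\varphi(s)\in\varphi(A)$, i.e. $\varphi(s)\in\varphi(t)^{-1}\varphi(A)$. Therefore $\bigcup_{h\in H}\bigl(h^{-1}\bigl(\bigcup_{t\in F}\varphi(t)^{-1}\varphi(A)\bigr)\bigr)=\bigcup_{(h,t)\in H\times F}(h\cdot\varphi(t))^{-1}\varphi(A)$ contains a thick set, hence is thick; taking $G=\{h\cdot\varphi(t):h\in H,\ t\in F\}$, which is finite, shows $\varphi(A)$ is piecewise syndetic in $T$.

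The step I expect to be the main obstacle is the middle one: showing that a set which is thick within the piecewise-syndetic sub-semigroup $\varphi(S)$ lifts, via the finite translate set $H$, to a thick set of $T$. Concretely, given a finite $E\subseteq T$, I must find $y\in T$ with $E\cdot y\subseteq\bigcup_{h\in H}(h^{-1}\varphi(U))$; the natural approach is to first use thickness of $\bigcup_{h\in H}(h^{-1}\varphi(S))$ in $T$ to slide $E$ into a region where a whole translate lands in a single $h^{-1}\varphi(S)$, and then use thickness of $\varphi(U)$ inside $\varphi(S)$ to further absorb $h\cdot E\cdot y$ into $\varphi(U)$. Making the two thickness applications interact correctly — keeping track of which finite sets get fed into which thickness property, and in what order — is the delicate bookkeeping, though for commutative $S$ (so $\varphi(S)$ is commutative) the translates commute freely and this is cleaner than in the general case. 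The referee's simplification presumably streamlines exactly this interaction.
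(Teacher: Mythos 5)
Your plan is correct and follows essentially the same route as the paper's proof: the witnessing finite set is the product $H\cdot\varphi(F)$ (the paper's $\varphi(F)+E$), and thickness is obtained by first translating a given finite set into $\bigcup_{h\in H}h^{-1}\varphi(S)$, pulling back to $S$, applying thickness of $\bigcup_{t\in F}t^{-1}A$ there, and pushing forward through $\varphi$. The only caveat is that in your ``main obstacle'' step you cannot arrange a whole translate to land in a \emph{single} $h^{-1}\varphi(S)$ — thickness of a finite union only gives a possibly different $h$ for each element — but this is harmless since, exactly as in the paper (which picks $t_k$ per element $k$), the final union over all of $H\times F$ absorbs these per-element choices.
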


\begin{proof} Assume that $A$ is piecewise syndetic in $S$
and $\varphi(S)$ is piecewise syndetic in $T$.
Pick finite $F\subseteq S$ such that $\bigcup_{s\in F}(-s+A)$
is thick in $S$ and pick finite $E\subseteq T$ such that
$\bigcup_{t\in E}\big(-t+\varphi(S)\big)$
is thick in $T$.  To show that $\varphi(A)$ is 
piecewise syndetic in $T$, it suffices to show that
$\bigcup_{j\in\varphi(F)+E}\big(-j+\varphi(A)\big)$
is thick in $T$.  

To see this, let $K$ be a finite nonempty subset of $T$.
Pick $n\in T$ such that $K+n\subseteq \bigcup_{t\in E}\big(-t+\varphi(S)\big)$.
For each $k\in K$, pick $t_k\in E$ such that $t_k+k+n\in\varphi(S)$ and 
pick $b_k\in S$ such that $\varphi(b_k)=t_k+k+n$. Let 
$B=\{b_k:k\in K\}$ and pick $m\in S$ such that $B+m\subseteq
\bigcup_{s\in F}(-s+A)$.  We claim that
$$K+n+\varphi(m)\subseteq \bigcup_{j\in\varphi(F)+E}\big(-j+\varphi(A)\big)$$
so let $k\in K$ be given. Pick $s\in F$ such that
$s+b_k+m\in A$. Then $\varphi(s)+t_k\in\varphi(F)+E$ 
and $(\varphi(s)+t_k)+k+n+\varphi(m)=
\varphi(s+b_k+m)\in\varphi(A)$. \end{proof}

\begin{lemma}
\label{lemMPS} If $M\subseteq S\times\mathbb{N}$ is piecewise syndetic
in $S\times\mathbb{N}$, 
then for any $x,y\in S$ and $t\in\mathbb{N}$,
$\{(a+nx+y,n\cdot t):(a,n)\in M\}$
is piecewise syndetic in $S\times\mathbb{N}$.
\end{lemma}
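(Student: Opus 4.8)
The plan is to exhibit the set $\{(a+n\cdot x+y,\,n\cdot t):(a,n)\in M\}$ as the image of $M$ under a homomorphism of $(S\times\mathbb{N},+)$, followed by the operation of adding the fixed element $y$ to every first coordinate, and to control these two operations separately. Define $\psi:S\times\mathbb{N}\to S\times\mathbb{N}$ by $\psi(a,n)=(a+n\cdot x,\,n\cdot t)$. Since $S$ is commutative and $(m+n)\cdot s=m\cdot s+n\cdot s$ for every $s$, the map $\psi$ is a homomorphism. Writing $P^{+y}=\{(b+y,c):(b,c)\in P\}$ for a set $P\subseteq S\times\mathbb{N}$, we have $\bigl(\psi(M)\bigr)^{+y}=\{(a+n\cdot x+y,\,n\cdot t):(a,n)\in M\}$, which is exactly the set we must show is piecewise syndetic. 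So it suffices to establish \textbf{(A)} that $\psi(M)$ is piecewise syndetic in $S\times\mathbb{N}$, and \textbf{(B)} that $Q^{+y}$ is piecewise syndetic in $S\times\mathbb{N}$ whenever $Q\subseteq S\times\mathbb{N}$ is; applying (B) with $Q=\psi(M)$ then finishes the proof.

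For (A), Lemma~\ref{lemBG} applied to the homomorphism $\psi$ (with both the source and the target taken to be $S\times\mathbb{N}$) reduces matters to showing that the image $\psi(S\times\mathbb{N})=\{(s+n\cdot x,\,n\cdot t):s\in S,\ n\in\mathbb{N}\}$ is piecewise syndetic in $S\times\mathbb{N}$. This is the step I expect to be the main obstacle, since $\psi(S\times\mathbb{N})$ looks deceptively thin: its nonempty ``slices'' occur only over the multiples of $t$ in the second coordinate, and the slice above $n\cdot t$ is merely $S+n\cdot x$. The resolution uses that piecewise syndeticity only requires the union over a \emph{fixed finite} family of downward translates to be thick. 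I would take the finite set $G=\{(x,\,t-\rho):\rho\in\{0,1,\dots,t-1\}\}\subseteq S\times\mathbb{N}$ and verify that $\bigcup_{g\in G}\bigl(-g+\psi(S\times\mathbb{N})\bigr)$ is thick. Given a finite subset of $S\times\mathbb{N}$, enlarge it to a box $E_{1}\times\{1,\dots,K\}$ and translate that box by $\bigl((K+1)\cdot x,\,1\bigr)$. For a point $(e,k)$ of the box, the residue of $k+1$ modulo $t$ selects the unique $g=(x,\,t-\rho)\in G$ for which $(t-\rho)+(k+1)$ is a positive multiple of $t$, say $n\cdot t$ with $n\leq K+2$; and the only remaining requirement, namely that $x+e+(K+1)\cdot x$ lie in $S+n\cdot x$, then holds automatically because $e+(K+2)\cdot x=\bigl(e+(K+2-n)\cdot x\bigr)+n\cdot x$ with the first summand in $S$. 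No cancellativity of $S$ is used anywhere.

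For (B), fix a finite $H\subseteq S\times\mathbb{N}$ with $W:=\bigcup_{h\in H}(-h+Q)$ thick. Using only associativity, $(p+w)+y=p+(w+y)$, so for each $h=(p,q)\in H$ one checks that $(w,c)\in -h+Q$ implies $(w+y,c)\in -h+Q^{+y}$; hence $(-h+Q)^{+y}\subseteq -h+Q^{+y}$ and therefore $\bigcup_{h\in H}(-h+Q^{+y})\supseteq W^{+y}$. Moreover $W^{+y}$ is thick: if $E\subseteq S\times\mathbb{N}$ is finite and $(z,i)\in S\times\mathbb{N}$ satisfies $E+(z,i)\subseteq W$, then $E+(z+y,i)\subseteq W^{+y}$. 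As supersets of thick sets are thick, $\bigcup_{h\in H}(-h+Q^{+y})$ is thick, so $Q^{+y}$ is piecewise syndetic. Combining (A) and (B) shows that $\{(a+n\cdot x+y,\,n\cdot t):(a,n)\in M\}=\bigl(\psi(M)\bigr)^{+y}$ is piecewise syndetic in $S\times\mathbb{N}$, as required.
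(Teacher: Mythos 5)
Your proof is correct and follows essentially the same route as the paper: reduce to Lemma~\ref{lemBG} by showing the image of the relevant homomorphism is piecewise syndetic, and handle the translation by $y$ with a direct thickness computation on the union of translates. The only difference is that the paper factors your $\psi$ as $\chi_t\circ\varphi_x$ with $\varphi_x(a,n)=(a+n\cdot x,n)$ and $\chi_t(a,n)=(a,n\cdot t)$, whose images are respectively thick and syndetic (two very short verifications), whereas you treat the composite map in a single step, which is what forces your more delicate combined argument with the set $G$; that argument does go through, including the boundary case $n=K+2$ where the ``first summand'' degenerates to $e$ itself.
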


\begin{proof}
For a fixed $x\in S$, let $\varphi_{x}:S\times\mathbb{N}\rightarrow S\times\mathbb{N}$
be a homomorphism defined by $\varphi_{x}(a,n)=(a+nx,n)$. We claim
that $\varphi_{x}(S\times\mathbb{N})$ is thick in $S\times\mathbb{N}$
and so piecewise syndetic and hence this transformation preserves
piecewise syndeticity.

To prove the claim, let us take a finite subset $F=\{(s_{1},n_{1}),(s_{2},n_{2}),\ldots,\break
(s_{m},n_{m})\}$
of $S\times\mathbb{N}$  and observe that  $F+\big((n_{1}+n_{2}+\ldots+n_{m}+1)x,1\big)\subseteq
\varphi_{x}(S\times\mathbb{N})$.

Now for a fixed $t\in\mathbb{N},$ let $\chi_{t}:S\times\mathbb{N}\rightarrow S\times\mathbb{N}$
be a homomorphism defined by $\chi_{t}(a,n)=(a,n\cdot t)$. We claim
that, $\chi_{t}(S\times\mathbb{N})$ is syndetic in $S\times\mathbb{N}$,
in particular piecewise syndetic. Therefore this transformation also
preserves piecewise syndeticity.

To prove the claim, pick $x\in S$. We show that $S\times\mathbb{N}\subseteq\bigcup_{j=1}^t
(-(x,j)+\chi_{t}(S\times\mathbb{N}))$. So let $(a,n)\in S\times\mathbb{N}$ be given.
Pick $j\in\{1,2,\ldots,t\}$ such that $t$ divides $n+j$ and let
$m=\frac{n+t}{m}$. Then $(x,j)+(a,n)=\chi_t(x+a,m)$.

Again for any fixed $y\in S$, we claim that $\psi_{y}:S\times\mathbb{N}\to S\times\mathbb{N}$
defined by $\psi_{y}(a,n)=(a+y,n)$ preserves piecewise syndeticity.
To see this, let $A\subseteq S\times\mathbb{N}$ be piecewise syndetic in $S\times\mathbb{N}$.
Pick a finite nonempty subset $E$ of $S\times\mathbb{N}$ such that
$\bigcup_{(a,n)\in E}(-(a,n)+A)$ is thick.  Since
$$\textstyle \bigcup_{(a,n)\in E}(-(a,n)+A)\subseteq \bigcup_{(a,n)\in E}\big(-(a+y,n)+\psi_y(A)\big)\,,$$
we have that $\bigcup_{(a,n)\in E}\big(-(a+y,n)+\psi_y(A)\big)$ is thick, so
$\psi_y(A)$ is piecewise syndetic.

As $(a+nx+y,n.t)=\psi_{y}\circ\chi_{t}\circ\varphi_{x}(a,n)$ and
all the three maps preserves piecewise syndeticity, we have the desired
result.
\end{proof}

Now we will prove the main theorem.

\begin{proof}[Proof of Theorem \ref{thmMain}]
 As $M$ is piecewise syndetic, pick a finite nonempty set $E\subseteq S$
such that $\bigcup_{t\in E}(-t+M)$ is thick and let $r=|E|$.
Pick by Lemma \ref{lemHomo} a finite subset $A$ of $S$ such 
that, whenever $A$ is $r$-colored, there exists a monochromatic
homothetic copy of $F$.  Then, given $a\in S$ and $n\in\mathbb{N}$,
if $a+n\cdot A$ is $r$-colored, there exist $b\in S$ and $t\in\mathbb{N}$
such that $a+n\cdot(b+t\cdot F)$ is monochromatic.

Let $B=\{(a,n)\in S\times\mathbb{N}:a+n\cdot A\subseteq\bigcup_{t\in E}(-t+M)\}$.
We claim that $B$ is thick in $S\times\mathbb{N}$. To see this, let
$C$ be a finite nonempty subset of $S\times \mathbb{N}$.
Let $G=\bigcup_{(a,n)\in C}(a+(n+1)\cdot A)$.  By the thickness
of $\bigcup_{t\in E}(-t+M)$, pick $z\in S$ such that
$G+z\subseteq\bigcup_{t\in E}(-t+M)$. Then, given $(a,n)\in C$,
$a+(n+1)\cdot A\subseteq G$ so
$a+z+(n+1)\cdot A\subseteq \bigcup_{t\in E}(-t+M)$
and thus $(a,n)+(z,1)\in B$.

Let $s=|\HO_A(F)|$, enumerate $\HO_A(F)$ as $\{M_1,M_2,\ldots,M_s\}$,
and enumerate $E$ as $\{t_1,t_2,\ldots,t_r\}$.
Note that, for any $(a,n)\in B$, there are some $i\in\{1,2,\ldots,r\}$
and some $(b,u)\in S\times\mathbb{N}$ such that 
$$a+n\cdot(b+u\cdot F)\subseteq a+n\cdot A\cap(-t_i+M)\,.$$

Define $\varphi:B\to E\times \HO_A(F)$ as follows. First pick
the least $i\in\{1,2,\ldots,r\}$ such that there exist $b\in S$ and
$u\in \mathbb{N}$ such that $a+n\cdot (b+u\cdot F)\subseteq (a+n\cdot A)\cap(-t_i+M)$ and
note that $b+u\cdot F\in \HO_A(F)$. Now pick the least $j\in\{1,2,\ldots,s\}$ 
such that $a+n\cdot M_j\subseteq (a+n\cdot A)\cap(-t_i+M)$. 
Then define $\varphi(a,n)=(t_i,M_j)$.

Now as $E\times \HO_{A}(F)$ is finite, the mapping $\varphi$ gives
a finite coloring of $B$ so pick $(i,j)\in \{1,2,\ldots,r\}\times
\{1,2,\ldots,s\}$ such that $Q=\{(a,n)\in B:\varphi(a,n)=(t_i,M_j)\}$
is piecewise syndetic. (We are using here the elementary fact that
if the union of finitely many sets is piecewise syndetic, then one
of them is.)

Choose some $b\in S$ and $u\in\mathbb{N}$ such that $b+u\cdot F=M_{j}$.
then $(a,n)\in Q$ implies $a+n\cdot(b+u\cdot F)\subseteq(-t_{i}+M)$
and hence $a+t_{i}+n\cdot(b+u\cdot F)\subseteq M$.
Now by Lemma \ref{lemMPS}, $\widetilde{Q}=\{(a+n\cdot b+t_{i},n\cdot u)\,:\,(a,n)\in Q\}$
is piecewise syndetic. For $(a_{1},n_{1})\in\widetilde{Q}$ , $a_{1}+n_{1}\cdot F\subseteq M$
and this proves the theorem.
\end{proof}

The following corollary gives an elementary proof of a special case
of Theorem \ref{BHthm}.

Here we denote the set of all homothetic copies of $F=\{s_{1},s_{2},\ldots,s_{l}\}$
by $HC_{F}$, i.e. the set $\{(a+n\cdot s_{1},a+n\cdot s_{2},\ldots,a+n\cdot s_{l})\,:\,a\in S$ and $n\in\mathbb{N}\}$.
It is easy to check that $HC_{F}$ is a subsemigroup of $S^{l}$.

\begin{corollary}
\label{Maincor} Let $(S,+)$ be a commutative semigroup and
$F\subseteq S$ be a given finite nonempty set of cardinality $l$. Then for
any piecewise syndetic subset $M\subseteq S$, $M^{l}\cap HC_{F}$
is piecewise syndetic in $HC_{F}$.
\end{corollary}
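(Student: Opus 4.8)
The plan is to derive Corollary \ref{Maincor} from Theorem \ref{thmMain} by setting up an explicit isomorphism between the semigroup $S\times\mathbb{N}$ (restricted to the relevant piecewise syndetic set) and the subsemigroup $HC_F$ of $S^l$, and then transporting piecewise syndeticity across it using Lemma \ref{lemBG}. First I would write $F=\{s_1,s_2,\ldots,s_l\}$ and define the map $\Phi:S\times\mathbb{N}\to S^l$ by $\Phi(a,n)=(a+n\cdot s_1,a+n\cdot s_2,\ldots,a+n\cdot s_l)$. A one-line check using commutativity shows $\Phi$ is a homomorphism, and by definition its image is exactly $HC_F$. Applying Theorem \ref{thmMain} to $M$ and $F$ gives that $P:=\{(a,n)\in S\times\mathbb{N}:a+n\cdot F\subseteq M\}$ is piecewise syndetic in $S\times\mathbb{N}$; note that $(a,n)\in P$ precisely when $\Phi(a,n)\in M^l$, i.e. $\Phi(P)=\Phi(S\times\mathbb{N})\cap M^l=HC_F\cap M^l$.

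Next I would invoke Lemma \ref{lemBG} with the homomorphism $\varphi=\Phi:S\times\mathbb{N}\to S^l$, taking $A=P$. To apply it I need two hypotheses: that $P$ is piecewise syndetic in $S\times\mathbb{N}$ (already established via Theorem \ref{thmMain}), and that $\Phi(S\times\mathbb{N})=HC_F$ is piecewise syndetic in $S^l$. But Lemma \ref{lemBG} yields piecewise syndeticity of $\varphi(A)$ \emph{in $T=S^l$}, whereas the corollary asks for piecewise syndeticity of $M^l\cap HC_F$ \emph{in $HC_F$}. So the real target is to run Lemma \ref{lemBG} with $T=HC_F$ itself: take $\varphi=\Phi:S\times\mathbb{N}\to HC_F$ (which is a surjective homomorphism), so the condition ``$\varphi(S\times\mathbb{N})$ is piecewise syndetic in $T$'' becomes ``$HC_F$ is piecewise syndetic in $HC_F$,'' which is trivially true. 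Then Lemma \ref{lemBG} directly gives that $\varphi(P)=M^l\cap HC_F$ is piecewise syndetic in $HC_F$, which is exactly the conclusion.

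The one genuinely substantive point—and the place I expect the only real obstacle—is verifying the identity $\Phi(P)=M^l\cap HC_F$ cleanly, i.e. that $a+n\cdot F\subseteq M$ is equivalent to all $l$ coordinates $a+n\cdot s_i$ lying in $M$, which is immediate, together with confirming that $\Phi$ really is a semigroup homomorphism onto $HC_F$; both reduce to the distributive-type identity $(a+b)+(n+m)\cdot s_i=(a+n\cdot s_i)+(b+m\cdot s_i)$ valid in any commutative semigroup. Everything else is bookkeeping. One should double-check that Lemma \ref{lemBG} was stated for general (not necessarily commutative) semigroups so that applying it with $S\times\mathbb{N}$ and $HC_F$ is legitimate—it was. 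Hence the proof is short: define $\Phi$, observe it is a surjective homomorphism onto $HC_F$ with $\Phi^{-1}(M^l)\cap(S\times\mathbb{N})=P$, apply Theorem \ref{thmMain} to see $P$ is piecewise syndetic, and apply Lemma \ref{lemBG} to push this forward to $M^l\cap HC_F$ being piecewise syndetic in $HC_F$.
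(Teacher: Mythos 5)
Your proposal is correct and follows essentially the same route as the paper: define the surjective homomorphism $\varphi(a,n)=(a+n\cdot s_{1},\ldots,a+n\cdot s_{l})$ onto $HC_{F}$, apply Theorem \ref{thmMain} to get that $\{(a,n):a+n\cdot F\subseteq M\}$ is piecewise syndetic in $S\times\mathbb{N}$, and push forward with Lemma \ref{lemBG} taking $T=HC_{F}$. The only cosmetic difference is that you verify the image equals $M^{l}\cap HC_{F}$ exactly, while the paper contents itself with the inclusion $\varphi(C)\subseteq M^{l}\cap HC_{F}$; both suffice.
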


\begin{proof}
Let us define a surjective homomorphism $\varphi:S\times\mathbb{N}\to HC_{F}$
by, $\varphi(a,n)=(a+n\cdot s_{1},a+n\cdot s_{2},\ldots,a+n\cdot s_{l})$.

Now let $C=\{(a,n)\in S\times\mathbb{N}:a+n\cdot F\subseteq M\}$. Then $C$
is piecewise syndetic in $S\times\mathbb{N}$ by Theorem \ref{thmMain}.
Now $\varphi(C)\subseteq M^{l}\cap HC_{F}$ and as $C$ is
piecewise syndetic in $S\times\mathbb{N}$, $\varphi(C)$ is piecewise
syndetic in $HC_{F}$ by Lemma \ref{lemBG} and so $M^{l}\cap HC_{F}$ is piecewise syndetic
in $HC_{F}$.
This proves the claim.
\end{proof}

\textbf{Acknowledgements:} The first author acknowledges the grant UGC-NET
SRF fellowship with id no. 421333 of CSIR-UGC NET December 2016. The
second author acknowledges the grant CSIR-SRF fellowship with file
no.\hfill\break 09/106(0149)/2014-EMR-1. We also acknowledge the anonymous referee
for several helpful comments on the paper.

\bibliographystyle{alpha}

\begin{thebibliography}{BG}

\bibitem[B]{B} Mathias Beiglb\"ock, {\it Arithmetic
progressions in abundance by combinatorial tools\/}, Proc.\ Amer.\
Math.\ Soc.\ {\bf 137} (2009), no. 12, 3981-3983.

\bibitem[BG]{BG} V. Bergelson and D. Glasscock, {\it On the
interplay between additive and multiplicative largeness and its combinatorial
applications\/}, J. Algebra {\bf 503} (2018), 67-103.

\bibitem[BH]{BH} V.Bergelson and N. Hindman, {\it Partition
regular structures contained in large sets are abundant\/} J. Combin.\
Theory (Series A) {\bf 93} (2001), 18-36.

\bibitem[FG]{FG} H.Furstenberg and E. Glasner, 
{\it Subset dynamics and van der Waerden's theorem\/}, 
Topological dynamics and applications (Minneapolis, MN, 1995), 197-203, Contemp.\ Math.\ 
{\bf 215}, Amer.\ Math.\ Soc., Providence, RI, 1998.

\bibitem[HJ]{HJ} A. Hales and R. Jewett, {\it Regularity
and positional games\/}, Trans.\ Amer.\  Math.\  Soc.\  {\bf 106} (1963), 222-229.

\bibitem[HS]{HS} N. Hindman and D. Strauss, {\it Algebra in the Stone-\v Cech compactification: 
theory and applications, 2nd edition\/}, Walter de Gruyter \& Co., Berlin, 2012.

\bibitem[V]{V} B. van der Waerden, {\it Beweis einer Baudetschen 
Vermutung\/}, Nieuw Arch.\ Wiskunde {\bf 19} (1927), 212-216.

\end{thebibliography}

\end{document}